\theoremstyle{plain} 
\newtheorem{thm}{\indent Theorem}[section] 
\newtheorem{lemma}[thm]{\indent Lemma}
\newtheorem{proposition}[thm]{\indent Proposition}
\theoremstyle{definition} 
\newtheorem{defn}[thm]{\indent Definition}
\newtheorem{example}[thm]{\indent Example}
\theoremstyle{remark}
\newcommand{\bR}{{\mathbb R}}
\begin{document}

\title[Calibrated submanifolds in neck manifolds]{Calibrated submanifolds in neck manifolds} 

\author[H. Nakahara]{Hiroshi Nakahara} 
\subjclass[2010]{53C38.
}

\address{
Department of Mathematics \endgraf 
Tokyo Institute of Technology \endgraf
 2-21-1  O-okayama, Meguro, Tokyo\endgraf
Japan
}
\email{12d00031@math.titech.ac.jp}

\maketitle

\begin{abstract}
We find calibrated submanifolds in neck manifolds. Particularly, we obtain a calibrated submanifold in the Lagrangian self-expander constructed by Joyce, Lee and Tsui. 
\end{abstract}
\section{\rm{Introduction}}
Minimal or volume-minimizing surfaces have been investigated since Lagrange considered the variational problem of finding the surface $z=z(x, y)$ of least area stretched across a given closed contour in 1762, and Harvey and Lawson invented the theory of calibrated geometry in \cite{HL} by which we can find volume-minimizing surfaces in Riemannian manifolds. For example, since both special Lagrangian submanifolds and a pair of oriented $m$-planes which satisfies the angle criterion, see also \cite{La}, are calibrated, they are volume-minimizing. In recent years the special Lagrangian submanifolds in Calabi-Yau $n$-folds has been extensively studied. For instance, it is a key ingredient in the Thomas-Yau Conjecture. It is well-known that Lawlor necks are explicit examples of special Lagrangian submanifolds in the complex Euclidean space. However, although Lawlor necks are very important in special Lagrangian geometry, little attention has been paid to the submanifolds inside. In this paper we consider neck-shaped manifolds $M\times N,$ to be explained below. In the neck manifolds, we get calibrations and the calibrated submanifolds. Particularly, we will see that the hypersurfaces $\{y=0\}$ in Lawlor necks or the Lagrangian self-expanders constructed by Joyce, Lee and Tsui in \cite[Theorem C]{JLT} are calibrated. 
The following Definition \ref{d} and Proposition \ref{p} are extracts from Joyce \cite[Chapter 4]{J}.

\begin{defn}\label{d}
Let $(M,g)$ be a riemannian manifold. An oriented tangent $k$-plane $V$ on $M$ is a vector subspace $V$ of some tangent space $T_x M$ to $M$ with $\dim V=k,$ equipped with an orientation. If $V$ is an oriented tangent $k$-plane on $M$ then $g|_V$ is a Euclidean metric on $V,$ so combining $g|_V$ with the orientation on $V$ gives a natural volume form ${\rm vol}_{V}$ on $V,$ which is a $k$-form on $V.$

Now let $\varphi$ be a closed $k$-form on $M.$ We say that $\varphi$ is a calibration on $M$ if for every oriented $k$-plane $V$ on $M$ we have $\varphi |_V \leq {\rm vol}_V.$ Here $\varphi |_V=\alpha \cdot {\rm vol}_V$ for some $\alpha \in \bR ,$ and $\varphi |_V \leq {\rm vol}_V$ if $\alpha \leq 1.$ Let $N$ be an oriented submanifold of $M$ with dimension $k.$ Then each tangent space $T_x N $ for $x\in N$ is an oriented tangent $k$-plane. We say that $N$ is a calibrated submanifold or $\varphi$-submanifold if $\varphi|_{T_x N}=
{\rm vol}_{T_x N} $ for all $x\in N.$   
\end{defn} 
\begin{proposition}\label{p}
Let $(M,g)$ be a riemannian manifold, $\varphi$ a calibration on $M,$ and $N$ a compact $\varphi$-submanifold in $M.$ Then $N$ is volume-minimizing in its homology class. 
\end{proposition}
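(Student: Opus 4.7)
The plan is to give the classical three-line comparison argument of Harvey--Lawson. Let $N'$ be any compact oriented $k$-dimensional submanifold of $M$ homologous to $N$, so that there is a $(k+1)$-chain $W$ with $\partial W=N-N'$. The goal is to show $\mathrm{vol}(N)\leq \mathrm{vol}(N')$.

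First I would rewrite the volume of $N$ as an integral of $\varphi$. Since $N$ is a $\varphi$-submanifold, Definition \ref{d} gives $\varphi|_{T_xN}=\mathrm{vol}_{T_xN}$ at every $x\in N$, and integrating over $N$ yields $\mathrm{vol}(N)=\int_N\varphi$.

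Next I would transfer this integral from $N$ to $N'$ using the closedness of $\varphi$ and Stokes' theorem:
\[
\int_N \varphi \;-\; \int_{N'} \varphi \;=\; \int_{\partial W} \varphi \;=\; \int_W d\varphi \;=\; 0.
\]
Finally I would apply the defining calibration inequality pointwise on $N'$: at each $x\in N'$ one has $\varphi|_{T_xN'}\leq \mathrm{vol}_{T_xN'}$ as $k$-forms on $T_xN'$, and integrating gives $\int_{N'}\varphi \leq \mathrm{vol}(N')$. Chaining the three steps produces $\mathrm{vol}(N)=\int_N\varphi=\int_{N'}\varphi \leq \mathrm{vol}(N')$, which is exactly the required minimality.

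The only point demanding care is the Stokes step, where one must ensure that $N$ and $N'$ actually cobound a smooth $(k+1)$-chain $W$ to which Stokes' theorem applies to the smooth form $\varphi$. For compact oriented submanifolds representing the same homology class this is standard (via, e.g., de Rham theory or integral currents), so I do not anticipate any serious obstacle here; once this bookkeeping is in place, the proof reduces to the purely algebraic chaining of the three displays above.
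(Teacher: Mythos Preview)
Your argument is the standard Harvey--Lawson calibration proof and is correct. The paper does not actually prove this proposition: it explicitly omits the proof and refers the reader to Joyce \cite[Section~4]{J}, where the argument given is precisely the one you have written (use $\varphi|_{T_xN}=\mathrm{vol}_{T_xN}$ to get $\mathrm{vol}(N)=\int_N\varphi$, apply Stokes with $d\varphi=0$ to pass to $N'$, then use the calibration inequality on $N'$). So your proposal agrees with the cited source, and there is nothing further to compare.
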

We omit the proof of Proposition \ref{p}. The reader can check it in \cite[Section 4]{J}. 

The following Theorem \ref{th} is our main result.
\begin{thm}\label{th}
Let $M$ be an oriented submanifold in $\bR^n .$ Let $(N,h(p))$ be riemannian manifolds for all $p\in M ,$ where $\{h(p)\}_p$ is a smooth family of riemannian metrics on $N.$
Let $f_1 \ldots ,f_n :N \to \bR _+$ be positive and smooth functions and 
$g(q)$ riemannian metrics on $M $ defined by $g(q)=(\sum_{j=1}^n f_j ^2 (q) \, dx_j ^2)|_M ,$ for any $q\in N .$ Suppose that there exists a point $q_0 \in N$ such that $\Pi_{k=1}^n f_j (q_0)=\min_{q\in N} (\Pi_{k=1}^n f_j (q)) .$ We write $\pi : M\times N \to M $ for the projection such that $\pi(p, q)= p.$ 
Then $\pi ^* {\rm vol} _{g(q_0)}$ is a calibration on the  riemannian manifold $(M\times N, g(q)+h(p))$ where $ {\rm vol} _{g(q_0)}$ is the volume form on $(M, g(q_0)),$
and $M\times \{q_0\}$ is the calibrated submanifold. 
\end{thm}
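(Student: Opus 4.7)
The plan is to verify directly the two conditions of Definition~\ref{d} for $\varphi := \pi^* {\rm vol}_{g(q_0)}$, and then to check equality along $M\times\{q_0\}$. Closedness is automatic: ${\rm vol}_{g(q_0)}$ is a top-degree form on $M$, hence $d\,{\rm vol}_{g(q_0)} = 0$, and therefore $d\varphi = \pi^*d\,{\rm vol}_{g(q_0)}=0$.

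The heart of the argument is the pointwise comass inequality $\varphi|_V \leq {\rm vol}_V$ at an arbitrary point $(p,q) \in M\times N$ and for every oriented tangent $k$-plane $V \subset T_{(p,q)}(M\times N)$, where $k = \dim M$. I would pick a positively oriented $(g(q)+h(p))$-orthonormal basis $e_1,\dots,e_k$ of $V$ and, via the product splitting $T_{(p,q)}(M\times N) = T_pM \oplus T_qN$, write $e_i = a_i + b_i$ with $a_i \in T_pM$ and $b_i \in T_qN$. Orthonormality then becomes the matrix identity $G + H = I_k$, where $G_{ij} := g(q)(a_i,a_j)$ and $H_{ij} := h(p)(b_i,b_j)$ are symmetric positive semidefinite. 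Hence $0 \leq G \leq I_k$ in the PSD ordering, so $\det G \leq 1$ and ${\rm vol}_{g(q)}(a_1,\dots,a_k)=\sqrt{\det G}\leq 1$.

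The remaining and crucial step is to compare ${\rm vol}_{g(q_0)}$ with ${\rm vol}_{g(q)}$ on the $k$-dimensional space $T_pM$. Both are top-degree forms there, so they differ by a positive scalar $\mu(p,q)$, and
\[
\varphi|_V(e_1,\dots,e_k) = {\rm vol}_{g(q_0)}(a_1,\dots,a_k) = \mu(p,q)\,{\rm vol}_{g(q)}(a_1,\dots,a_k) \leq \mu(p,q).
\]
The main obstacle, which is where the hypothesis on $q_0$ must enter, is establishing $\mu(p,q) \leq 1$ for every $(p,q)$. Using $g(q)=(\sum_j f_j^2(q)\,dx_j^2)|_M$ and applying the Cauchy--Binet formula in a basis $W=(w_1,\dots,w_k)$ of $T_pM\subset \bR^n$, both $\det(g(q)|_{T_pM})$ and $\det(g(q_0)|_{T_pM})$ expand into weighted sums $\sum_{|S|=k}\prod_{j\in S}f_j^2(\cdot)\,(\det W_S)^2$ with identical non-negative weights $(\det W_S)^2$; in the top-dimensional situation $\dim M = n$ relevant to the Lawlor-neck and JLT applications, the only surviving index is $S = \{1,\dots,n\}$, and the minimality hypothesis collapses the ratio to $\mu = \prod_j f_j(q_0)/\prod_j f_j(q) \leq 1$.

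Finally, to see that $M\times\{q_0\}$ is $\varphi$-calibrated, one takes $(p,q_0)$ and the horizontal plane $V = T_pM \oplus \{0\}$: then $b_i = 0$, so $H = 0$, $G = I_k$, and $\mu(p,q_0) = 1$, which gives $\varphi|_V = {\rm vol}_V$ for every $p \in M$, as required by Definition~\ref{d}.
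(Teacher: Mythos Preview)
Your two-step split matches the paper exactly: your $G+H=I_k\Rightarrow\det G\le1$ is its Lemma~2.2 (proved there by a second Cauchy--Binet expansion of $C\,{}^tC$ rather than by the PSD ordering), and your $\mu(p,q)\le1$ step is its Lemma~2.1.

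You are right to isolate $\mu\le1$ as the crux, and your Cauchy--Binet expansion $\sum_{|S|=k}\bigl(\prod_{j\in S}f_j^2(\cdot)\bigr)(\det W_S)^2$ is correct. But your resolution only covers $k=n$, and the claim that the JLT application is top-dimensional is wrong: there $M=\mathcal S^{n-1}\subset\bR^n$, so $k=n-1$. Worse, for general $k<n$ the hypothesis $\prod_j f_j(q_0)=\min_q\prod_j f_j(q)$ alone does \emph{not} force $\mu\le1$; for instance take $n=2$, $k=1$, $f_1=e^q$, $f_2=e^{-q}$, $M$ the $x_1$-axis, $q_0=0$. The paper's Lemma~2.1 glosses over precisely this point: it passes from $\sum_S(\det A(q)_S)^2$ to $f_1^2(q)\cdots f_n^2(q)\sum_S(\det B_S)^2$, a factoring that is only valid when $k=n$. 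What rescues the JLT example is extra structure not captured by the theorem's hypothesis: each $f_j^2(s)=1/a_j+s^2$ is \emph{individually} minimized at $s=0$, so every partial product $\prod_{j\in S}f_j^2$ is too, and then your weighted-sum comparison goes through term by term. For a complete proof at the stated level of generality you would need to strengthen the hypothesis on $q_0$ accordingly.
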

Particularly, we have the following example. 
\begin{example}
Let $a_1 ,\ldots  ,a_n >0$ and $\alpha \geq 0$ be constants.  Define riemannian metrics $g(s) $ on $\mathcal {S}^{n-1}=\{(x_1 ,\ldots ,x_n)\in \bR ^n ; \sum_{j=1}^n x_j ^2 =1\}$ by $g(s)=(\sum_{j=1}^n (1/a_j +s^s)dx_j ^2)|_{\mathcal{S}^{n-1}},$ for every $s\in \bR ,$ and riemannian metrics $h(x)$ on $\bR$ by 
$$h(x)=(1/a_1 +s^2 )\cdots (1/a_n +s^2)\sum_{j=1}^n \frac{x_j ^2}{1/a_j +s^2 }\,ds^2$$ for all $x=(x_1 ,\ldots ,x_n)\in \mathcal{S}^{n-1}.$ 
Then we can regard $(\mathcal{S} ^{n-1} \times \bR ,g(s)+h(x)) $ as the Lagrangian self-expander constructed by Joyce, Lee and Tsui in \cite{JLT}. It has been proved in \cite{N} that when $a_1 =\cdots =a_n,$ $\mathcal{S} ^{n-1} \times \{0\} $ is minimal in  $(\mathcal{S} ^{n-1} \times \bR ,g(s)+h(x)) $ and,
by Theorem \ref{th}, $\pi^* {\rm vol}_{g(0)}$ is a calibration on  $(\mathcal{S} ^{n-1} \times \bR ,g(s)+h(x)) $ and $\mathcal{S} ^{n-1} \times \{0\} $ is the calibrated submanifold. 
\end{example}
\subsection*{Acknowledgments}
The author wishes to express his thanks to his supervisor Akito Futaki for a great encouragement and several helpful comments.  
\section{\rm{Proof of Theorem \ref{th}}}
From Lemmas \ref{1} and \ref{2}, we can obtain Theorem \ref{th}. 
\begin{lemma}\label{1}
Fix $k= \dim M$ and $p \in M$ in the situation of Theorem \ref{th}. For any $X_1 ,\ldots ,X_k \in T_{p}M$ and every $q\in N$ we have $$|{\rm vol}_{g(q_0)}(X_1 ,\ldots ,X_k)| \leq |{\rm vol}_{g(q)}(X_1 ,\ldots ,X_k)|$$ where ${\rm vol}_{g(q)}$ is the volume form on the riemannian manifold $(M, g(q)).$
\end{lemma}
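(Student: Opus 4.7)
The plan is to reduce the claim to a monotonicity statement for determinants of Gram matrices. First I would express each tangent vector in the ambient Euclidean coordinates, $X_i = \sum_{\ell=1}^{n} X_i^{\ell}\, \partial_{x_\ell}$, and note that the Gram matrix of $(X_1,\dots,X_k)$ with respect to $g(q)$ is
$$B(q) \;=\; \bigl(g(q)(X_i,X_j)\bigr)_{i,j=1}^{k} \;=\; \sum_{\ell=1}^{n} f_\ell^{\,2}(q)\, v_\ell v_\ell^{\top}, \qquad v_\ell := (X_1^{\ell},\dots,X_k^{\ell})^{\top}.$$
Since $|{\rm vol}_{g(q)}(X_1,\dots,X_k)|^{2} = \det B(q)$, the lemma is equivalent to $\det B(q_0) \le \det B(q)$.

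The second step is to observe that each rank-one summand $v_\ell v_\ell^{\top}$ is positive semidefinite. Consequently, if one has the termwise inequality $f_\ell^{\,2}(q_0) \le f_\ell^{\,2}(q)$ for every $\ell$, then $B(q_0) \preceq B(q)$ in the Loewner order, and monotonicity of the determinant on the cone of positive semidefinite matrices gives $\det B(q_0) \le \det B(q)$. An essentially equivalent route uses the Cauchy--Binet formula,
$$\det B(q) \;=\; \sum_{\substack{S\subset\{1,\dots,n\}\\ |S|=k}} \Bigl(\prod_{s\in S} f_s(q)\Bigr)^{\!2} D_S^{\,2}, \qquad D_S := \det\bigl(X_i^{s}\bigr)_{i\in[k],\, s\in S},$$
and reduces the lemma to the subset-product inequality $\prod_{s\in S} f_s(q_0) \le \prod_{s\in S} f_s(q)$ for each size-$k$ subset $S$ that contributes nontrivially.

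The main obstacle, as I foresee it, is bridging between the hypothesis---minimality of the single product $\prod_{j=1}^{n} f_j$ at $q_0$---and the subset (or termwise) minimality that both routes above demand. In the example each $f_j^{\,2}(s) = 1/a_j + s^{2}$ is individually minimized at $s=0$, so the stronger condition holds automatically; I would therefore expect the proof to effectively invoke this pointwise minimality, either by reading it into the given hypothesis or by deducing it from the specific structure of the family $\{f_j\}$ under consideration. Once the termwise comparison is in hand, taking square roots yields the stated inequality.
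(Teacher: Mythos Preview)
Your Cauchy--Binet route is precisely the paper's approach. The paper forms the $k\times n$ matrix $A(q)$ with entries $f_\ell(q)X_i^{\ell}$, writes
\[
\bigl({\rm vol}_{g(q)}(X_1,\dots,X_k)\bigr)^2=\det\bigl(A(q)\,{}^t\!A(q)\bigr)=\sum_{|S|=k}\bigl(\det A(q)_S\bigr)^2,
\]
and then asserts that this equals $f_1(q)^2\cdots f_n(q)^2\sum_{|S|=k}(\det B_S)^2$, with $B$ the coefficient matrix $(X_i^{\ell})$. From that factorization the minimality of $\prod_j f_j$ at $q_0$ would indeed finish the proof.

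The obstacle you flag is real, and it is exactly where the paper's argument slips. Since only the columns indexed by $S$ enter $A(q)_S$, one has $\det A(q)_S=\bigl(\prod_{\ell\in S}f_\ell(q)\bigr)\det B_S$, so the correct expansion is your
\[
\det B(q)=\sum_{|S|=k}\Bigl(\prod_{\ell\in S}f_\ell(q)\Bigr)^{2}D_S^{\,2},
\]
and the full product $\prod_{j=1}^{n}f_j(q)^2$ cannot be pulled out unless $k=n$. Hence the hypothesis ``$\prod_{j=1}^{n}f_j$ is minimized at $q_0$'' is not what the computation uses; what is actually needed is the subset-product inequality $\prod_{\ell\in S}f_\ell(q_0)\le\prod_{\ell\in S}f_\ell(q)$ for every $S$ with $D_S\ne 0$ (or the stronger termwise inequality $f_\ell(q_0)\le f_\ell(q)$, which feeds your Loewner-order argument as well). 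In the paper's motivating example each $f_\ell$ is individually minimized at $q_0$, so the conclusion is valid there; but as a general statement the lemma, and the paper's proof of it, require the stronger hypothesis you identified.
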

\begin{proof}
Set $X_i = (X_{i1},\ldots ,X_{in})\in \bR ^n,$ for $i= 1,\ldots ,k,$ and  
\begin{equation*}
A(q)=
\begin{pmatrix}
f_1  (q)X_{11} &\cdots & f_{n}(q)X_{1n}\\
\vdots &  &\vdots \\
f_1  (q)X_{k1} &\cdots & f_{n}(q)X_{kn}\\
\end{pmatrix}
,
\end{equation*}
for all $q\in N. $
Write
\begin{equation*}
B=
\begin{pmatrix}
X_{11} &\cdots &X_{1n}\\
\vdots &  &\vdots \\
X_{k1} &\cdots & X_{kn}\\
\end{pmatrix}
\end{equation*}
and $[n]=\{1,\ldots ,n\}.$ Note that if  $A\in M_{k, n}(\bR)$ is a $k\times n$ matrix such that $n\geq k$ and $S\subset [n]$ such that the number of the element in $S$ is $k$, i.e. $|S|=k$, then we write $A_S$ for the $k\times k$ matrix whose columns are the columns of $A$ at indices from $S,$ and if $B\in M_{n, k}(\bR)$ is a $n\times k $ matrix such that $n\geq k$ and $S\subset [n]$ such that $|S|=k$ then we write $B^S$ for the $k\times k$ matrix whose rows are the rows of $B$ at indices from $S.$
By the Cauchy-Binet formula, we obtain 
\begin{equation}\label{1}
\begin{split}
({\rm vol}_{g(q)}(X_1 ,\ldots ,X_k))^2 =&\det (g(q)(X_i ,X_j))_{i, j}\\
=& \det \left(\sum_{l=1}^n f_l ^2 (q)X_{il}X_{jl}\right)_{i,j}\\
=& \det A(q)\,{}^t\!A (q) \\
=& \sum_{S\subset [n], |S|=k} \det (A(q))_S  \cdot \det ({}^t\!A(q))^S \\
=& \sum_{S\subset [n], |S|=k} (\det (A(q))_S)^2\\
=& f_1 (q)^2 \cdots f_n (q)^2 \sum_{S\subset [n], |S|=k} (\det (B)_S)^2 \,\,.\\
\end{split}
\end{equation}
Thus, by the definition of $q_0$ and \eqref{1}, we get $$|{\rm vol}_{g(q_0)}(X_1 ,\ldots ,X_k)| \leq |{\rm vol}_{g(q)}(X_1 ,\ldots ,X_k)|.$$
This completes the proof.
\end{proof}
Next we consider the following Lemma \ref{2}.
\begin{lemma}\label{2}
Fix $(p,q)\in M\times N $ in the situation of Theorem \ref{th}. Let $V\subset T_{(p,\,q)}(M\times N)$ be an oriented tangent $k$-plane, i.e. a vector subspace of $T_{(p,\,q)}(M\times N)$ with $\dim V= k,$ $(v_1 ,\ldots ,v_k)$ a basis of $\,V$ with the positive orientation. Then 
$$|{ \rm vol }_{g(q)}(\pi _* v_1 \ldots ,\pi _* v_k)|\leq { \rm vol }_{V}( v_1 \ldots ,v_k) .$$ 
%where ${ \rm vol }_V$ is the volume form on $V$ with respect to $(g(q)+h(p))|_{V}.$
\end{lemma}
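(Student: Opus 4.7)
The plan is to exploit the orthogonal product structure of the metric $g(q) + h(p)$ at the fixed point $(p,q)$: the tangent space $T_{(p,q)}(M \times N)$ decomposes as $T_p M \oplus T_q N$, and the two summands are mutually orthogonal for this combined metric, with $g(q)$ acting on the first factor and $h(p)$ on the second.

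First I would choose an orthonormal basis $e_1, \ldots, e_k$ of $(T_p M, g(q))$ and an orthonormal basis $f_1, \ldots, f_m$ of $(T_q N, h(p))$ with $m = \dim N$, so that together they form an orthonormal basis of $T_{(p,q)}(M \times N)$. Expanding each positively oriented basis vector of $V$ as $v_i = \sum_a a_{ia}\, e_a + \sum_b b_{ib}\, f_b$, the projection reads $\pi_* v_i = \sum_a a_{ia}\, e_a$. Collecting the coefficients into matrices $A = (a_{ia}) \in M_{k,k}(\bR)$ and $B = (b_{ib}) \in M_{k,m}(\bR)$, the orthogonality of $T_p M$ and $T_q N$ gives $(g(q) + h(p))(v_i, v_j) = (A\,{}^t\!A + B\,{}^t\!B)_{ij}$, so that ${\rm vol}_{g(q)}(\pi_* v_1, \ldots, \pi_* v_k) = \det A$ while ${\rm vol}_V(v_1, \ldots, v_k) = \sqrt{\det(A\,{}^t\!A + B\,{}^t\!B)}$.

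The lemma therefore reduces to the linear-algebraic inequality $(\det A)^2 \leq \det(A\,{}^t\!A + B\,{}^t\!B)$. For this I would apply the Cauchy--Binet formula --- exactly as in the proof of Lemma \ref{1} --- to the $k \times (k+m)$ block matrix $C = [A \;\; B]$ obtained by placing $B$ next to $A$, giving
\[
\det(C\,{}^t\!C) \;=\; \det(A\,{}^t\!A + B\,{}^t\!B) \;=\; \sum_{S \subset [k+m],\; |S|=k} (\det C_S)^2.
\]
The summand corresponding to $S = \{1, \ldots, k\}$ contributes $(\det A)^2$, while all remaining terms are nonnegative, yielding the desired inequality.

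No serious obstacle arises in this argument; the content is essentially linear algebra once the orthogonal decomposition of the product metric is set up correctly. The one point requiring care is to verify that $g(q) + h(p)$ really does restrict to an orthogonal direct sum on $T_p M \oplus T_q N$ at the single point $(p,q)$, because $g$ and $h$ depend on the opposite coordinate of the product and one must evaluate each at the fixed value of that coordinate before computing inner products. Combined with Lemma \ref{1}, the resulting chain of inequalities delivers $|\pi^* {\rm vol}_{g(q_0)}(v_1, \ldots, v_k)| \leq {\rm vol}_V(v_1, \ldots, v_k)$ for every oriented tangent $k$-plane $V$, completing the proof of Theorem \ref{th}; equality on $T_{(p, q_0)}(M \times \{q_0\})$ is immediate since there $B = 0$ and $q = q_0$.
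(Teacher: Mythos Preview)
Your proposal is correct and follows essentially the same route as the paper: pick an orthonormal basis of $T_{(p,q)}(M\times N)$ adapted to the orthogonal splitting $T_pM\oplus T_qN$, form the $k\times(k+m)$ coefficient matrix of $(v_1,\ldots,v_k)$, and apply Cauchy--Binet so that $(\det A)^2$ appears as the single summand $S=\{1,\ldots,k\}$ in $\det(C\,{}^tC)=\sum_S(\det C_S)^2$. The paper's matrices $C$ and $D$ are exactly your $[A\;\;B]$ and $A$, and its $t$ is your $m$; the arguments are identical up to notation.
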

\begin{proof}
Set $\dim N=t.$
Let $(e_1 ,\ldots ,e_k )$ be an orthonormal basis of $T_{p} M$ and
 
$(e_1 \ldots ,e_k ,e_{k+1},\ldots ,e_{k+t})$ be an orthonormal basis of $T_{(p,\,q)} M\times N .$ 
Write $v_j = \sum _{l=1}^{k+t}a_j ^l e_{l}$ for $j=1,\ldots k ,$ where $a_{j}^l \in \bR .$ 
Write 
\begin{equation*}
C=
\begin{pmatrix}
a_1 ^1 &\cdots &a_{1}^{k+t}\\
\vdots & & \vdots \\
a_k ^1 &\cdots &a_{k}^{k+t}\\
\end{pmatrix}
\end{equation*}
and
\begin{equation*}
D=
\begin{pmatrix}
a_1 ^1 &\cdots &a_{1}^{k}\\
\vdots & & \vdots \\
a_k ^1 &\cdots &a_{k}^{k}\\
\end{pmatrix}
.
\end{equation*}
Note that if $A\in M_{k, k+t}(\bR)$ is a $k\times (k+t)$ matrix and $S\subset [k+t]$ such that the number of the element in $S$ is $k$, i.e. $|S|=k$, then we write $A_S$ for the $k\times k$ matrix whose columns are the columns of $A$ at indices from $S,$ and if $B\in M_{k+t, k}(\bR)$ is a $(k+t)\times k $ matrix and $S\subset [k+t]$ such that $|S|=k$ then we write $B^S$ for the $k\times k$ matrix whose rows are the rows of $B$ at indices from $S.$
Now, by the Cauchy-Binet formula, we have 
\begin{equation}\label{2}
\begin{split}
( { \rm vol }_{V}( v_1 \ldots ,v_k) )^2=&\det \,\left((g(q)+h(p))(v_i ,v_j)\right)_{i,\,j}\\
=&\det \,\left((g(q)+h(p))\left(\sum _{l=1}^{k+t}a_i ^l e_{l},\sum _{b=1}^{k+t}a_j ^b e_{b}\right)\right)_{i,\,j}\\
=&\det \, \left(\sum_{l,\,b =1}^{k+l}a_i ^l a_j ^b \delta_{l\, b}\right)_{i\, ,j}\\
=&\det \, \left(\sum_{l}^{k+t}a_i ^l a_j ^l \right)_{i\, ,j}\\
=&\det \, C \,{}^t\! C\\
=& \sum_{S\subset [k+t], |S|=k} \det C_S  \cdot \det \,({}^t\!C)^S \\
=& \sum_{S\subset [k+t], |S|=k} (\det C_S)^2\\
\end{split}
\end{equation}
and 
\begin{equation}\label{3}
\begin{split}
({ \rm vol }_{g(q)}(\pi _* v_1 \ldots ,\pi _* v_k))^2=&\det \,\left(g(q)(\pi_*v_i ,\pi_*v_j)\right)_{i,\,j}\\
=&\det \,\left(g(q)\left(\sum _{l=1}^{k}a_i ^l e_{l},\sum _{b=1}^{k}a_j ^b e_{b}\right)\right)_{i,\,j}\\
=&\det \, \left(\sum_{l,\,b =1}^{k}a_i ^l a_j ^b \delta_{l\, b}\right)_{i\, ,j}\\
=&\det \, \left(\sum_{l}^{k}a_i ^l a_j ^l \right)_{i\, ,j}\\
=&\det \, D \,{}^t\! D\\
=&(\det \, D )^2 .\\
\end{split}
\end{equation}
By \eqref{2} and \eqref{3}, it is clear that 
$$|{ \rm vol }_{g(q)}(\pi _* v_1 \ldots ,\pi _* v_k)|\leq { \rm vol }_{V}( v_1 \ldots ,v_k) .$$ 
This finishes the proof.
\end{proof}
We are now ready to prove Theorem \ref{th}. 

\textit{Proof of Theorem {\rm \ref{th}.}}
Let $V\subset T_{(p,\,q)}(M\times N)$ be an oriented tangent $k$-plane on $M\times N$ and $(v_1 ,\ldots ,v_k)$ a basis of $\,V$ with the positive orientation. Then, from Lemmas \ref{1} and \ref{2}, we have 
\begin{equation}
\begin{split}
\pi ^* {\rm vol}_{g(q_0)}(v_1 ,\ldots ,v_k)=&{\rm vol}_{g(q_0)}(\pi _* v_1 ,\ldots ,\pi _* v_k) \\
\leq & |{\rm vol}_{g(q)}(\pi _* v_1 ,\ldots ,\pi _* v_k) |\\
\leq &{\rm vol}_{V}(v_1 ,\ldots ,v_k).
\end{split}
\end{equation}
Therefore the closed $k$-form $\pi ^* {\rm vol}_{g(q_0)}$ is a calibration on $M\times N. $ Furthermore it is clear that $M\times \{0\}$ is a calibrated submanifold with respect to $\pi ^* {\rm vol}_{g(q_0)}.$ This completes the proof.
\qed
\section{\rm{Discussion}}
The author believes that the length-minimizing curve in the dumbbell surface with a neck in $\bR^3$ becomes a singular point by the mean curvature flow. In general, if we consider the mean curvature flow of some submanifold, we have to check the necks in the submanifold. 

Volume-minimizing submanifolds correspond to many inequalities and the author hopes that many applications of this paper will appear.   
%
%
%
%In addition, we can think of some solid of revolutions as neck manifolds. 


\begin{thebibliography}{99}
%\bibitem{Ecker}
%\textsc{K. Ecker},
%Regularity theory for mean curvature flow, Birkh${\rm \ddot{a}}$user.
%
\bibitem{HL}
\textsc{R. Harvey and H.B. Lawson},
Calibrated geometries, Acta Math. 148 (1982), 48-156.

\bibitem{J}
\textsc{D. Joyce},
Riemannian holonomy groups and calibrated geometry, Oxford Graduate Texts in Mathematics 12.

\bibitem{JLT}
\textsc{D. Joyce, Y.-I. Lee and M.-P. Tsui},
Self-similar solutions and translating solitons for
Lagrangian mean curvature flow, J. Differential Geom. 84 (2010), 127--161.

\bibitem{La}
\textsc{G. Lawlor}, 
The angle criterion, Inventiones math. 95 (1989), 437--446.

\bibitem{N}
\textsc{H. Nakahara},
Mean curvature flow in submanifolds, arXiv:1401.3401v3, 2015. 

%\bibitem{N}
%\textsc{H. Nakahara},
%Some examples of self-similar solutions and translating solitons for Lagrangian mean curvature flow, Tohoku Mathematical Journal, Vol. 65, No. 3.

%\bibitem{Wang}
%\textsc{M. -T. Wang},
%Long-time existence and convergence of graphic mean curvature flow
%in arbitrary codimension, Invent. Math. 148 (2002) 3, 525--543.


\end{thebibliography}
\end{document}